\newtheorem{thm}{Theorem}[section]
\newtheorem{lem}[thm]{Lemma}
\def\ll{{ L}}
\begin{document}
\title[Leibniz algebras with associated Lie algebras $sl_2\dot{+} R$]
{Leibniz algebras with associated \\ Lie algebras $sl_2\dot{+} R$ ($dim R=2$).}
\author{L.M. Camacho, S. G\'{o}mez-Vidal, B.A. Omirov}
\address{[L.M. Camacho -- S. G\'{o}mez-Vidal] Dpto. Matem\'{a}tica Aplicada I.
Universidad de Sevilla. Avda. Reina Mercedes, s/n. 41012 Sevilla.
(Spain)} \email{lcamacho@us.es --- samuel.gomezvidal@gmail.com}
\address{[B.A. Omirov] Institute of Mathematics and Information Technologies
 of Academy of Uzbekistan, 29, F.Hodjaev srt., 100125, Tashkent (Uzbekistan)}
\email{omirovb@mail.ru}


\thanks{Partially supported by the PAICYT, FQM143 of the Junta de Andaluc\'{\i}a (Spain). The last named author was partially supported by IMU/CDC-program}

\begin{abstract}
From the theory of finite dimensional Lie algebras it is known that every finite
dimensional Lie algebra is decomposed into a semidirect sum of semisimple subalgebra and solvable radical. Moreover, due to work of Mal'cev the study of solvable Lie algebras is reduced to the study of nilpotent ones.

For the finite dimensional Leibniz algebras the analogues of the mentioned results are not proved yet. In order to get some idea how to establish the results we examine the Leibniz algebra for which the quotient algebra with respect to the ideal generated by squares elements of the algebra (denoted by $I$) is a semidirect sum of semisimple Lie algebra and the maximal solvable ideal. In this paper the class of complex Leibniz algebras, for which quotient algebras by the ideal $I$
are isomorphic to the semidirect sum of the algebra $sl_2$ and two-dimensional solvable ideal $R$, are described.

\end{abstract} \maketitle

\textbf{Mathematics Subject Classification 2010}: 17A32, 17B30.

\textbf{Key Words and Phrases}: Lie algebra, Leibniz
algebra, semisimple algebra, solvability.

\section{Introduction}

The notion of Leibniz algebra was introduced in 1993 by J.-L. Loday \cite{Lod1} as a generalization of Lie algebras. The last 20 years the theory of Leibniz algebras has been actively studied and many results of the theory of Lie algebras have been extended
to Leibniz algebras.

Due to Levi-Mal'cev's Theorem and results of Mal'cev in \cite{Mal1} the study of finite dimensional Lie algebras is reduced to nilpotent ones. From now on a lot of works are devoted to the description of finite-dimensional nilpotent Lie algebras see, for example,  \cite{Jac}, \cite{Kost}, \cite{Zel}.

In fact, the nilpotency of a finite-dimensional Lie algebra is characterized by Engel's
Theorem. In \cite{Kost} the local nilpotency of Lie algebra over a field of zero characteristic, satisfying the Engel's $n$-condition it is proved. Further, E.I. Zelmanov \cite{Zel} generalized this result to global nilpotency of a Lie algebra with Engel's $n$-condition. In \cite{Omi} the global nilpotency for the case of Leibniz algebra with Engel's $n$-condition was extended.

An algebra $L$ over a field $F$ is called
{\it Leibniz algebra} if for any elements $x, y, z \in L$
{\it the Leibniz identity} is holds true:
            $$
[x, [y, z]] = [[x, y], z] - [[x, z], y]
$$ where $[-,-]$ is multiplication of $L$.

Let $L$ be a Leibniz algebra and $I=ideal< [x,x]\  | \ x\in L >$ be the ideal of $L$ generated by all squares. Then $I$ is the minimal ideal with respect to the property that  $L/I$ is a Lie algebra. The natural epimorphism $\varphi  : L \rightarrow   L/I$ determines the corresponding Lie algebra $L/I$ of the Leibniz algebra $L.$

According to \cite{Jac} a 3-dimensional simple Lie algebra $\ll$ is said to be split if $\ll$ contains an element $h$ such that
 $ad(h)$ has a non-zero characteristic root $\rho$ belonging to the base field. Such algebra has a basis $\{e,f,h\}$ with the multiplication table
 $$\begin{array}{lll}
[e,h]=2e, & [f,h]=-2f, & [e,f]=h,\\{}
[h,e]=-2e, & [h,f]=2f, & [f,e]=-h.
\end{array}$$

This simple 3-dimensional Lie algebra is denoted by $sl_2$ and the basis $\{e,f,h\}$ is called {\it canonical basis}. Note that any 3-dimensional simple Lie algebra is isomorphic to $sl_2.$


The analogue of the Levi-Mal'cev's Theorem for Leibniz algebras is not proved yet. Moreover, for Leibniz algebra the situation when quotient algebra by ideal $I$ is isomorphic to a simple Lie algebra is not clarified, as well. In this direction we know only result of the paper  \cite{Rakh}, where the Leibniz algebras, for which quotient algebras by ideal $I$ are isomorphic to the simple Lie algebra $sl_2,$ are classified.

In fact, Dzhumadil'daev proposed the following construction of Leibniz algebras:

Let $G$ be a simple Lie algebra and $M$ be an irreducible skew-symmetric $G$-module (i.e. $[x,m]=0$ for all $x\in G, m\in M$). Then the vector space $Q=G+M$ equipped with the multiplication $$[x+m,y+n]=[x,y]+[m,y],$$ where $m, n\in M, x, y\in G$ is a Leibniz algebra.
Moreover, for this Leibniz algebra its corresponding Lie algebra is a simple algebra.

In fact, the notion of a simple Leibniz algebras was introduced in \cite{Abdykasimova}, \cite{AbdDzhum}.

Leibniz algebra $\ll $ is said to be {\it simple} if the only ideals of $\ll$ are $\{0\},$ $I,$ $\ll$ and $[\ll,\ll]\neq I.$ Obviously, in the case when a Leibniz algebra is Lie, the ideal $I$ is equal to zero. Therefore, this definition agrees with the definition of simple Lie algebra.

Note that the above mentioned Leibniz algebra is a simple algebra. It is also easy to see that the corresponding Lie algebra is simple for the simple Leibniz algebra.

In this paper, we study the class of complex Leibniz algebras, for which
its Lie algebra is isomorphic to the semidirect sum of the algebra $sl_2$ and a two-dimensional solvable ideal $R$.



 The representation of $sl_2$ is determined by the images $E,F,H$ of the base elements $e,f,h$ and we have
 $$[E,H]=2E, \ [F,H]=-2F, \ [E,F]=H, $$ $$[H,E]=-2E, \ [H,F]=2F, \ [F,E]=-H.$$

Conversely, any three linear transformations $E, F, H$ satisfying these relations determine a representation of $sl_2$ and hence a $sl_2$-module.

We suppose that a base field is the field of the complex numbers.
Then one has the following

\begin{thm}\label{jac} \cite{Jac}
For each integer $m=0,1,2,\dots$ there exists one and, in the sense of isomorphism, only one irreducible $sl_2$-module $M$ of dimension $m+1.$ The module $M$ has a basis $\{x_0,x_1,...,x_m\}$ such that the representing transformations $E,F$ and $H$ corresponding to the canonical basis $\{e,f,h\}$ are given by:

$$\begin{array}{ll}
H(x_k)=(m-2k)x_k, & 0\leq k\leq m, \\
F(x_m)=0, \ F(x_k)=x_{k+1}, & 0\leq k\leq m-1,  \\
E(x_0)=0, \ E(x_k)=-k(m+1-k)x_{k-1},& 1\leq k\leq m.\\
\end{array}$$
\end{thm}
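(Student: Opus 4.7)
The argument splits into existence and uniqueness, both by the standard route for $sl_2$-representations.

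For existence, I would take $M$ to be the $(m+1)$-dimensional complex vector space on $\{x_0,\ldots,x_m\}$, \emph{define} $E,F,H$ by the prescribed formulas, and check the defining commutator relations on every basis vector. The two relations $[E,H]=2E$ and $[F,H]=-2F$ reduce to one-line verifications using the action of $H$; the substantive relation is $[E,F]=H$. Evaluating the appropriate composition of $E$ and $F$ on $x_k$ yields $\bigl[(k+1)(m-k)-k(m+1-k)\bigr]x_k=(m-2k)\,x_k = H(x_k)$, as required. Irreducibility is then immediate: the $H$-eigenvalues $m,m-2,\ldots,-m$ are pairwise distinct, so any submodule is a sum of weight lines, and $E$ and $F$ already connect all of them into a single cycle.

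For uniqueness, let $M$ be any irreducible $sl_2$-module of dimension $m+1$. Since $\mathbb{C}$ is algebraically closed, $H$ has an eigenvector; the commutator relations show that $E$ and $F$ shift the $H$-weight by $\pm 2$, so iterating one of them terminates at zero in finitely many steps and produces a vector $x_0$ with $E(x_0)=0$ and $H(x_0)=\mu x_0$ for some scalar $\mu$ still to be determined. Define $x_k := F^k(x_0)$ for $k\ge 1$. Using $[E,F]=H$ and induction on $k$, I would derive a relation $E(x_k)=c_k\,x_{k-1}$ governed by a recursion of the shape $c_k=c_{k-1}+(\mu-2k+2)$, whose closed form is quadratic in $k$ and $\mu$. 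Finite-dimensionality forces a first index $m$ with $x_{m+1}=0$; applying $E$ to that equation and demanding $c_{m+1}=0$ pins down $\mu=m$, and back-substitution gives exactly $c_k=-k(m+1-k)$. The span of $x_0,\ldots,x_m$ is an invariant subspace, so by irreducibility it equals $M$, and the action of $E,F,H$ matches the stated formulas; uniqueness up to isomorphism follows because the formulas are completely determined by the choice of $x_0$.

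The main technical obstacle is the inductive identification of the coefficients $c_k$ together with the forcing of $\mu=m$ that ties the highest weight to the dimension. Keeping the signs consistent with the paper's bracket convention (in which $F$ acts as the lowering operator on weights rather than the raising one) is the most error-prone bookkeeping; once that recursion is correctly set up, irreducibility and uniqueness follow from the standard weight-space decomposition of $M$.
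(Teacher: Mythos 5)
Your sketch is correct, and it is essentially the classical argument: the paper itself gives no proof of this statement, citing Jacobson's book, and what you outline (verify the commutator relations on the explicit basis, get irreducibility from the distinct $H$-eigenvalues $m,m-2,\dots,-m$ being linked by $E$ and $F$, and prove uniqueness by the highest-weight string $x_k=F^k(x_0)$ with the recursion forcing $\mu=m$) is precisely the standard proof found there. Your key arithmetic check $(k+1)(m-k)-k(m+1-k)=m-2k$ is the right one, and you correctly flag the only delicate point, namely that the paper follows Jacobson's right-module convention, so with ordinary left composition the operators satisfy $EF-FE=-H$; just make sure, in the uniqueness step, to note that the nonzero $x_k$ are linearly independent (distinct $H$-eigenvalues), so irreducibility and $\dim M=m+1$ force the string to terminate exactly at index $m$.
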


In \cite{Rakh} using Theorem \ref{jac}, the authors described the complex finite dimensional Leibniz algebras whose $L/I$ is isomorphic to $sl_2.$

In this work, we consider the Leibniz algebra $\ll$ for which its corresponding Lie algebra is a semidirect sum of $sl_2$ and a two-dimensional solvable ideal $R.$ In addition we assume that $I$ is a right irreducible module over $sl_2.$

By verifying antisymmetric and Jacobi identities we derive that semidirect sum of $sl_2$ and a two-dimensional solvable Lie algebra is the direct sum of the algebras.

Let $\{x_0,x_1,\dots,x_m\}$ be a basis of $I$ and $\{e,f,h\}$ a basis of $sl_2.$ Thus, if $I$ is a right irreducible module over $sl_2$, then due Theorem \ref{jac} the products $[I,sl_2]$ are defined as follows:
$$\begin{array}{ll}
\, [x_k,h]=(m-2k)x_k & 0\leq k\leq m,\\
\, [x_k,f]=x_{k+1},  & 0\leq k\leq m-1, \\
\, [x_k,e]=-k(m+1-k)x_{k-1}, & 1\leq k\leq m, \\
 \end{array}$$
where omitted the products are equal to zero.


\section{Leibniz algebras with associated Lie algebras $sl_2\dot{+} R$}

Let $\ll$ be a Leibniz algebra such that $\ll/I\simeq sl_2\oplus R,$ where $R$ is a solvable Lie algebra and $\{\overline{e},\overline{h},\overline{f}\},$ $\{x_0,x_1,\dots,x_m\}$, $\{\overline{y_1},\overline{{y_2}},\dots,\overline{y_n}\}$ are the bases of $sl_2,$ $I$, $R$ respectively.

Let $\{e, h, f, x_0, x_1,\dots, x_m, y_1, {y_2},\dots, y_n\}$ be a basis
of the algebra $L$ such that $$\varphi(e)=\overline{e},\ \varphi(h)=\overline{h},\ \varphi(f)=\overline{f},\ \varphi(y_i)=\overline{y_i}, \ 1\leq i \leq n.$$

Then we have:
$$\begin{array}{lll}
\, [e,h]=2e+\sum\limits_{j=0}^m a_{eh}^jx_j, & [h,f]=2f+\sum\limits_{j=0}^m a_{hf}^jx_j, &[e,f]=h+\sum\limits_{j=0}^m a_{ef}^jx_j, \\
\, [h,e]=-2e+\sum\limits_{j=0}^m a_{he}^jx_j& [f,h]=-2f+\sum\limits_{j=0}^m a_{fh}^jx_j, & [f,e]=-h+\sum\limits_{j=0}^m a_{fe}^jx_j,\\
\, [e,y_i]= \sum\limits_{j=0}^m \alpha_{ij}x_j& [f,y_i]=\sum\limits_{j=0}^m \beta_{ij}x_j, & [h,y_i]=\sum\limits_{j=0}^m \gamma_{ij}x_j,\\
\, [x_k,h]=(m-2k)x_k, & 0\leq k\leq m,& \\
\, [x_k,f]=x_{k+1},  & 0\leq k\leq m-1, & \\
\, [x_k,e]=-k(m+1-k)x_{k-1}, & 1\leq k\leq m. &\\
 \end{array}$$
where $1\leq i\leq n.$

It is easy to check that similarly as in paper \cite{Rakh} one can get
$$\begin{array}{lll} \, [e,h]=2e, & [h,f]=2f, &[e,f]=h, \\
\, [h,e]=-2e& [f,h]=-2f, & [f,e]=-h,\\
\, [e,e]=0& [f,f]=0, & [h,h]=0.\\
 \end{array}$$

Let us denote the following vector spaces:
$$sl_2^{-1}=<e, h, f>, \ \ R^{-1}=<y_1, {y_2}, \dots y_n>.$$

The following result holds.
\begin{lem} \label{lema1} Let $L$ be a Leibniz algebra with condition
$L/I\cong sl_2\oplus R $, where $R$ is solvable ideal and $I$ is a right irreducible module over $sl_2$ with $m\neq3.$ Then $[sl_2^{-1}, R^{-1}]=0.$
\end{lem}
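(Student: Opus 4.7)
The plan is to prove that every coefficient $\alpha_{ij},\beta_{ij},\gamma_{ij}$ in the products $[e,y_i],[f,y_i],[h,y_i]$ must vanish, which is equivalent to the claim $[sl_2^{-1},R^{-1}]=0$. A crucial preliminary observation is that $[L,I]=0$: the Leibniz identity with $y=z$ gives $[x,[y,y]]=0$ for all $x,y\in L$, and a routine induction on the ideal-theoretic construction of $I$ from the squares extends this to $[x,w]=0$ for every $w\in I$. In particular $[a,x_j]=0$ for each $a\in\{e,f,h\}$ and each $j$, a fact I will use repeatedly.

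Next, for every pair $a,b\in\{e,f,h\}$ I would apply the Leibniz identity
\[
[a,[y_i,b]]=[[a,y_i],b]-[[a,b],y_i].
\]
Since $[y_i,b]\in I$, the left-hand side is zero by the previous step, leaving the clean identity $[[a,y_i],b]=[[a,b],y_i]$. Running through the six non-trivial ordered pairs $(a,b)$ and expanding both sides using the explicit $sl_2$-action on $I$ from Theorem \ref{jac}, one obtains a system of linear equations for the unknowns $\alpha_{ij},\beta_{ij},\gamma_{ij}$. A representative list of the relations produced this way is
\[
\alpha_{ij}(m-2j-2)=0,\qquad \beta_{ij}(m-2j+2)=0,
\]
\[
\gamma_{ik}=\alpha_{i,k-1},\qquad 2\beta_{ik}=\gamma_{i,k-1},
\]
\[
2\alpha_{ik}=(k+1)(m-k)\gamma_{i,k+1},\qquad (k+1)(m-k)\beta_{i,k+1}=\gamma_{ik},
\]
together with boundary conditions $\alpha_{im}=\beta_{i0}=\gamma_{i0}=\gamma_{im}=0$.

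Finally, I would combine these equations to propagate the constraints along the chain $\alpha \leftrightarrow \gamma \leftrightarrow \beta$. Substituting $\alpha_{i,k-1}=\gamma_{ik}$ into $\alpha_{ij}(m-2j-2)=0$ yields $\gamma_{ik}(m-2k)=0$, while combining $\gamma_{ik}=(k+1)(m-k)\beta_{i,k+1}$ with $2\beta_{ik}=\gamma_{i,k-1}$ produces $\gamma_{ik}\bigl((k+1)(m-k)-2\bigr)=0$. For $m\neq 3$ one verifies by a direct case inspection (splitting according to whether $m$ is even or odd, and noting that the only simultaneous roots of the relevant scalar factors occur at the single excluded value) that these two families of constraints force $\gamma_{ik}=0$ for all $k$; the vanishing of $\alpha$ and $\beta$ then follows from $\gamma_{ik}=\alpha_{i,k-1}$ and $2\beta_{ik}=\gamma_{i,k-1}$, together with the boundary equations. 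The main obstacle in the proof is precisely this final bookkeeping: one must track every relation from all six Leibniz identities and argue cleanly that the exceptional dimension is eliminated by the hypothesis $m\neq 3$, so that no residual free parameter remains in the system.
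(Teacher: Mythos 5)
Your setup is sound and is essentially the paper's own method: the observation $[L,I]=0$ (which also underlies the paper's computations, since it makes every bracket $[a,\cdot]$ with second entry in $I$ vanish), the Leibniz identities with two $sl_2$-entries and one $y_i$, and the resulting linear system in $\alpha_{ij},\beta_{ij},\gamma_{ij}$ are all correct; your six relations plus boundary conditions are exactly the constraints the paper extracts, only organized more systematically. The genuine problem is the final step. Your two combined constraints are $\gamma_{ik}(m-2k)=0$ and $\gamma_{ik}\bigl((k+1)(m-k)-2\bigr)=0$, and their only simultaneous root with $0\le k\le m$ is $(m,k)=(2,1)$ --- nothing happens at $m=3$, where $m-2k$ never vanishes and so $\gamma_{ik}=0$ automatically. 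Hence the hypothesis $m\neq3$, as you invoke it, does not remove the residual parameter: at $m=2$ the solution $\gamma_{i1}=\alpha_{i0}=2\beta_{i2}$ survives, and this is precisely the nonzero product $[e,y_1]=\lambda x_0$, $[h,y_1]=\lambda x_1$, $[f,y_1]=\tfrac12\lambda x_2$ appearing in the paper's family $L(\lambda,\mu,a)$ for three-dimensional $I$. So under the literal hypothesis ``$m\neq3$'' the conclusion is in fact false for $m=2$, and no amount of bookkeeping closes your case inspection as stated.

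The mismatch traces to a slip in the statement itself: the paper's proof says explicitly ``taking into account that $m\neq 2$'' at the decisive point, and the exceptional case it treats separately afterwards is $\dim I=3$, i.e.\ $m=2$; the ``$m\neq3$'' in the lemma should be read as ``$\dim I\neq3$''. With the hypothesis corrected to $m\neq2$ your argument does close: $\gamma_{ik}=0$ for all $k$, and then $\alpha_{ij}$ and $\beta_{ij}$ vanish via your chain relations and boundary equations, reaching the same conclusion the paper gets by the shorter sequence of identities $[e,[e,y_i]]$, $[e,[f,y_i]]$, $[e,[h,y_i]]$, $[f,[e,y_i]]$, $[f,[f,y_i]]$. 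As written, however, your concluding claim that the ``single excluded value'' $m=3$ is where the scalar factors vanish simultaneously is wrong, and that is a real gap in the proposal: you should either have proved the lemma under $m\neq2$ or flagged that the stated hypothesis cannot be the intended one.
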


\begin{proof}

 It is known that it is sufficient to prove the equality for the basic elements of $sl_2^{-1}$  and $R^{-1}$.
Consider the Leibniz identity:
$$\begin{array}{ll}
[e,[e,y_i]] &=[[e,e],y_i] - [[e,y_i],e] = -[[e,y_i],e]=\\[2mm]
&=-\sum\limits_{j=0}^m \alpha_{ij}[x_j,e]=\sum\limits_{j=1}^m(-mj+j(j-1))\alpha_{ij}x_{j-1},\qquad 1\leq i\leq n.
\end{array}$$

On the other hand, we have that  $[e,[e,y_i]]
=[e,\sum\limits_{j=0}^m\alpha_{ij}x_j]=0$ for $1\leq i\leq n.$

Comparing the coefficients at the basic elements we obtain
$\alpha_{ij}=0$ for $1 \leq j \leq m,$ thus  $[e, y_i]=\alpha_{i,0}x_0$ with $1\leq i\leq n.$

Consider the chain of equalities
$$\begin{array}{ll}
0&=[e,\sum\limits_{j=0}^m\beta_{ij}x_j]=[e,[f,y_i]]=[[e,f],y_i]-[[e,y_i],f]=\\[2mm]
&=[h,y_i]-\alpha_{i,0}[x_0,f]=[h,y_i]-\alpha_{i,0}x_1.
\end{array}$$

Then we have that $[h, y_i] =\alpha_{i,0}x_1$ with $1\leq i\leq n$.

From the equalities
$$\begin{array}{ll}
0 &= [e,[h,y_i]] = [[e,h],y_i]-[[e,y_i],h] = 2[e,y_i]- \alpha_{i,0} [x_0,h]=\\[2mm]
&=2\alpha_{ i,0}x_0 -m \alpha_{i,0}x_0=\alpha_{ i,0}(2-m)x_0,
\end{array}$$
it follows that $\alpha_{i,0} =0$ for $1\leq i\leq n.$ Taking into account that $m\neq 2$ we get $[e, y_i]=[h,y_i]=0$ with $1\leq i\leq n.$

From the equalities
$$\begin{array}{ll}
0 &= [f,[e,y_i]] = [[f,e],y_i] - [[f,y_i],e] = [h,y_i] - [[f,y_i],e] = -[[f,y_i],e]=\\[2mm]
&=-\sum\limits_{j=0}^m \beta_{ij}[x_j,e]=-\sum\limits_{j=0}^m (-mj+j(j-1))\beta_{ij}x_{j-1},
\end{array}$$
we derive $\beta_{i,j} = 0$ for $1\leq j \leq m$. Consequence,
$[f,y_i]= \beta_{i,0}x_0,$ for all $1\leq i\leq n$.

Similarly, from
$$\begin{array}{ll}
0&=[f,[f,y_i]]=[[f,f],y_i]-[[f,y_i],f] = [[f,y_i],f]=\\[2mm]
&= \beta_{i,0}[x_0, f]=\beta_{i,0}x_1,
\end{array}$$
we obtain $[f,y_i]=0$ for all $1\leq i\leq n.$

Thus, we obtain $[e,y_i] = [f,y_i] =
[h,y_i]=0$ with $1\leq i\leq n,$ i.e $[sl_2^{-1},R^{-1}]=0.$
\end{proof}

\section{Leibniz algebras with associated Lie algebras $sl_2 \oplus R$ ($dim R=2$).}

Let $R$ be a two-dimensional solvable Lie algebra, then from the classification of two-dimensional Lie algebras (see \cite{Jac}) we know that in $R$ there exists a basis $\{\overline{y_1},\overline{{y_2}}\}$ with the following table of multiplication
$$[\overline{y_1}, \overline{{y_2}}] = \overline{y_1},\qquad  [\overline{{y_2}}, \overline{y_1}] = - \overline{y_1}.$$

In the case when $m\neq3$ and $I$ a right irreducible module over $sl_2$, summarizing the results of Lemma \ref{lema1} we get the following table of multiplication:
$$\begin{array}{lll}
\, [e,h]=2e, & [h,f]=2f, &[e,f]=h, \\
\, [h,e]=-2e& [f,h]=-2f, & [f,e]=-h,\\
\, [x_k,h]=(m-2k)x_k & 0\leq k\leq m,&\\
\, [x_k,f]=x_{k+1},  & 0\leq k\leq m-1, & \\
\, [x_k,e]=-k(m+1-k)x_{k-1}, & 1\leq k\leq m, &\\
\, [y_i,e]=\sum\limits_{j=0}^m a_{i e}^jx_j ,& 1\leq i\leq 2,&\\
\, [y_i,f]=\sum\limits_{j=0}^m a_{i f}^jx_j,&1\leq i\leq 2,& \\
\, [y_i,h]=\sum\limits_{j=0}^m a_{i h}^jx_j,& 1\leq i\leq 2,& \\
\, [x_k,y_i]=\sum\limits_{j=0}^m a_{ij}^k x_j ,& 0\leq k\leq m,&1\leq i\leq 2,  \\
\, [y_1,{y_2}]=y_1+\sum\limits_{j=0}^m a_{12}^jx_j ,& [{y_2},y_1]=-y_1,&\\
\, [y_1,y_1]=\sum\limits_{j=0}^m a_{1}^jx_j, & [{y_2},{y_2}]=\sum\limits_{j=0}^m a_{{2}}^jx_j,&
 \end{array}\eqno(1)$$
where $\{e, h, f, x_0, x_1, …, x_m, y_1, {y_2}\}$ is a basis of $L.$

Let us present the following theorem which describes the Leibniz algebras with condition
$L/I\cong sl_2\oplus R$, where $ m\neq 3,\
n=2$ and $I$ a right irreducible module over $sl_2$.

\begin{thm}
 Let $L$ be a Leibniz algebra with the condition
$L/I\cong sl_2\oplus R$, where $R$ is two-dimensional solvable ideal and  $I$ a right irreducible module over
$sl_2\ (m\neq3)$. Then there exists a basis $\{e, h, f,
x_0, x_1, …, x_m, y_1, {y_2}\}$ of the algebra $L$ such that the table of multiplication in $L$ have the following form:
$$\begin{array}{lll} \, [e,h]=2e, & [h,f]=2f, &[e,f]=h, \\
\, [h,e]=-2e& [f,h]=-2f, & [f,e]=-h,\\
\, [x_k,h]=(m-2k)x_k & 0\leq k\leq m,&\\
\, [x_k,f]=x_{k+1},  & 0\leq k\leq m-1, & \\
\, [x_k,e]=-k(m+1-k)x_{k-1}, & 1\leq k\leq m, &\\
\, [y_1,{y_2}]=y_1& [{y_2},y_1]=-y_1, &\\
\, [x_k,{y_2}]=a x_k,& 0\leq k\leq m, \quad a\in F \\
 \end{array}$$
where omitted products are equal to zero.
\end{thm}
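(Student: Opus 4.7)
The plan is to eliminate the undetermined coefficients of table~(1) through a sequence of Leibniz identities applied to carefully chosen triples, concluding with one basis change on $y_2$. A standing fact to use freely is that in any Leibniz algebra the ideal $I$ lies in the right annihilator, i.e.\ $[L,I]=0$; this follows from $[x,[y,y]]=[[x,y],y]-[[x,y],y]=0$ by induction on the depth of ideal generation, and is already implicit in the proof of Lemma~\ref{lema1}. Consequently $[e,x_k]=[h,x_k]=[f,x_k]=[y_i,x_k]=[x_j,x_k]=0$, and any triple bracket of the form $[w,[v,z]]$ with $z\in I$ collapses to $[[w,v],z]$.

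\textbf{Step A (the products $[x_k,y_i]$).} Writing $[x_k,y_i]=\sum_j a_{ij}^k x_j$, I expand $0=[x_k,[y_i,e]]$ and $0=[x_k,[y_i,f]]$ via Leibniz. Comparing coefficients of each $x_p$ gives two coupled recursions in $(k,j)$ whose joint iteration forces $a_{ij}^k=\delta_{jk}c_i$ for some scalar $c_i$, so that $[x_k,y_i]=c_i x_k$. Expanding $[x_k,[y_1,y_2]]$ then yields $c_1 x_k = c_1c_2 x_k - c_1c_2 x_k = 0$, hence $c_1=0$; set $a:=c_2$.

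\textbf{Step B ($y_1$-row, $[y_1,y_2]$ correction, and $[y_1,y_1]$).} With $[x_l,y_1]=0$ in hand, the identity $0=[y_2,[e,y_1]]=[[y_2,e],y_1]-[[y_2,y_1],e]=0+[y_1,e]$ yields $[y_1,e]=0$ at once, and the same argument with $f$ or $h$ gives $[y_1,f]=[y_1,h]=0$. Expanding $[y_2,[y_2,y_1]]=y_1$ as $[[y_2,y_2],y_1]-[[y_2,y_1],y_2]=0+[y_1,y_2]=y_1+\sum a_{12}^j x_j$ shows $\sum a_{12}^j x_j=0$. Finally $[y_1,[e,y_1]]=0$ collapses to $\sum_{j\ge 1} a_1^j\,j(m+1-j)\,x_{j-1}=0$, forcing $a_1^j=0$ for $1\le j\le m$; then $[y_1,[h,y_1]]=0$ gives $m a_1^0\, x_0=0$, so that $[y_1,y_1]=0$.

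\textbf{Step C (killing $[y_2,sl_2]$ and $[y_2,y_2]$).} The map $x\mapsto [y_2,x]$ from $sl_2$ into $I$ is a $1$-cocycle by the $sl_2$-compatibility identities $[y_2,[e,h]]=2[y_2,e]$, $[y_2,[h,f]]=2[y_2,f]$ and $[y_2,[e,f]]=[y_2,h]$, which translate into linear relations among $\alpha^{(2)},\beta^{(2)},\gamma^{(2)}$; the identities $0=[y_2,[e,y_2]]$, $0=[y_2,[f,y_2]]$, $0=[y_2,[h,y_2]]$ further couple these parameters to the $a_2^j$. In particular $[y_2,[e,y_2]]=0$ forces $a_2^j=0$ for $1\le j\le m$. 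A basis change $y_2\mapsto y_2+\sum c_j x_j$ with $c_j=-a_2^j/a$ (when $a\neq 0$) then simultaneously nullifies $[y_2,sl_2]$ and $[y_2,y_2]$ without disturbing the products already normalised, while if $a=0$ the identity $[y_2,[h,y_2]]=0$ yields $m a_2^0=0$ directly and a cohomological basis change (justified by $H^1(sl_2,I)=0$) handles $[y_2,sl_2]$. I expect Step~C to be the main technical obstacle, since the compatibility of a \emph{single} basis change with both the cocycle trivialisation and the vanishing of $[y_2,y_2]$ depends on the explicit relations among $\alpha^{(2)},\beta^{(2)},\gamma^{(2)}$ and is precisely where the $m\neq 3$ hypothesis must intervene to rule out a resonance.
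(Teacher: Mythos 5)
Your plan is essentially correct and, in Steps A and B and in the case $a\neq 0$ of Step C, it runs parallel to the paper's proof: the paper also works from table (1), shows $[x_k,y_1]=0$ and $[x_k,y_2]=ax_k$ (it uses $[x_i,[h,y_j]]=0$ to get diagonality and $[x_i,[y_2,e]]=0$ to equalize the eigenvalues, whereas your $e$/$f$ recursions give both at once -- your variant checks out), then kills $[y_1,y_1]$, $[y_1,sl_2^{-1}]$ and the $I$-correction in $[y_1,y_2]$, and for $a\neq0$ makes $[y_2,y_2]=0$ by exactly your substitution $y_2\mapsto y_2-\sum (a_2^j/a)x_j$ and afterwards gets $[y_2,sl_2^{-1}]=0$ from $[y_2,[y_2,e]]=[y_2,[y_2,f]]=[y_2,[y_2,h]]=0$ together with $a\neq0$. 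Where you genuinely diverge is the case $a=0$: the paper grinds through the identities $[y_2,[e,h]]=2[y_2,e]$, $[y_2,[e,f]]=[y_2,h]$, $[y_2,[f,h]]=-2[y_2,f]$ and two explicit basis changes, while you invoke that $u\mapsto[y_2,u]$ is a $1$-cocycle and Whitehead's lemma $H^1(sl_2,I)=0$ to trivialize it by $y_2\mapsto y_2+x_*$; this is a valid and cleaner route, and the compatibility you worry about is in fact automatic when $a=0$ (the shift leaves $[x_k,y_2]$, $[y_1,y_2]$, $[y_2,y_1]$ untouched since $[L,I]=0$ and $[I,y_1]=0$, and it changes $[y_2,y_2]$ only by $[x_*,y_2]=a\,x_*=0$). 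So your approach buys a conceptual shortcut for the hard subcase, at the price of importing Whitehead's lemma where the paper stays elementary.

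Two side claims should be corrected, though neither breaks the argument. First, "$[y_2,[e,y_2]]=0$ forces $a_2^j=0$ for $1\le j\le m$" is only true when $a=0$: in general this identity reads $a\,a_{2e}^{p}+ (p+1)(m-p)\,a_2^{p+1}=0$, coupling the square to $[y_2,e]$; for $a\neq0$ you neither get nor need $a_2^j=0$, since the division-by-$a$ change kills the square regardless. Relatedly, the single change for $a\neq0$ does not by itself "nullify $[y_2,sl_2]$"; that vanishing comes from the subsequent identities $[y_2',[y_2',u]]=0$, $u\in\{e,f,h\}$, using $a\neq0$. Second, your expectation that the hypothesis $m\neq3$ must intervene in Step C to rule out a resonance is misplaced: that hypothesis (in the paper's proof it is actually $m\neq2$, i.e.\ $\dim I\neq3$, which is the case treated separately afterwards) is consumed entirely in Lemma \ref{lema1}, hence already in obtaining table (1) with $[sl_2^{-1},R^{-1}]=0$; nothing in Step C needs it beyond the nonvanishing of $j(m+1-j)$ and of $m$ (the degenerate possibility $m=0$ is glossed over by the paper as well).
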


\begin{proof}
 Let $L$ be an algebra satisfying the conditions of the theorem, then we get the table of multiplication $(1)$. Further we shall study the product $[I, R^{-1}]$.

We consider the chain of equalities
$$\begin{array}{ll}
0& = [x_i,[h,y_1]]=[[x_i,h],y_1]-[[x_i,y_1],h] = (m-2i)[x_i,y_1]-\sum\limits_{k=0}^m a_{1 k}^i[x_k,h]=\\[2mm]
&=(m-2i)\sum\limits_{k=0}^m a_{1 k}^ix_k-\sum\limits_{k=0}^m a_{1 k}^i(m-2k)x_k=\sum\limits_{k=0}^m a_{1 k}^i(m-2i-(m-2k))x_k=\\[2mm]
&=\sum\limits_{k=0}^m 2a_{1 k}^i(k-i)x_k,
\end{array}$$
from which we have $a_{1 k}^i= 0,$ with $0\leq i\leq m$ and $i\neq k.$ Thus, $[x_i,y_1]=a_{1 i}^i x_i=a_{1 i}x_i$ with $0\leq i\leq m.$

Similarly,
$$\begin{array}{ll}
0& = [x_i,[h,{y_2}]]=[[x_i,h],{y_2}]-[[x_i,{y_2}],h] = (m-2i)[x_i,{y_2}]-\sum\limits_{k=0}^m a_{{2} k}^i[x_i,h]=\\[2mm]
&=(m-2i)\sum\limits_{k=0}^m a_{{2} k}^ix_i-\sum\limits_{k=0}^m a_{{2} k}^i(m-2k)x_k=\sum\limits_{k=0}^m a_{{2} k}^i(m-2i-(m-2k))x_k=\\[2mm]
&=\sum\limits_{k=0}^m 2a_{{2} k}^i(k-i)x_k,
\end{array}$$
we get $[x_i,{y_2}]=a_{{2} i}^ix_i=a_{{2} i}x_i$ with $0\leq i\leq m.$

From the identity $[x_i,[y_1,{y_2}]]=[[x_i, y_1],{y_2}]-[[x_i, {y_2}],y_1],$
we deduce
$$\begin{array}{l}
[x_i,y_1+\sum\limits_{k=0}^m a_{1 {2}}^kx_k]=a_{1 i}[x_i, {y_2}] - a_{{2}i}[x_i, y_1]\quad \Rightarrow \\[2mm]
\Rightarrow \quad  [x_i, y_1] = a_{1i}a_{{2}i} x_i - a_{{2}i}a_{1i}x_i=0,
\end{array}$$
from which we have $[x_i, y_1]=0$ with $0\leq i\leq m,$ i.e. $[I,y_1]=0.$

We consider the identity $[x_i,[{y_2},e]] = [[x_i,{y_2}],e] - [[x_i, e],{y_2}]$ for $ 0 \leq i \leq m.$

Then
$$\begin{array}{ll}
0 &= a_{{2}i}[x_i,e] -
(-mi+i(i-1))[x_{i-1},{y_2}] =\\[2mm]
&= a_{{2}i}(-mi+i(i-1))x_{i-1}
-a_{{2},i-1}(-mi+i(i-1))x_{i-1}=\\[2mm]
&= - (-mi+i(i-1))( a_{{2}i}-a_{{2},i-1})x_{i-1} = 0,
\end{array}$$
which leads $a_{{2}i}=a_{{2},i-1}=a,$ i.e. $[x_i, {y_2}]= a x_i$ with $ 0\leq i \leq m$.

\

Now we shall study the products $[R^{-1}, R^{-1}]$ and $[R^{-1}, sl_2^{-1}]$.

\

Verifying the following
$$\begin{array}{ll}
0&=[y_1,\sum\limits_{j=0}^m a_{1f}^jx_j]=[y_1,[y_1,f]]=[[y_1,y_1],f] - [[y_1,f],y_1]=[[y_1,y_1],f]=\\[2mm]
&=\sum\limits_{j=0}^m a_{{1}}^j[x_j,f]=\sum\limits_{j=0}^{m-1} a_{{1}}^jx_{j+1},
\end{array}$$
we obtain $a_{1}^j=0$ for $0\leq j\leq m-1$, i.e. $[{y_1},{y_1}]= {a_{{1}}}^m x_m.$

Consider the equalities
$$0=[{y_1},[{y_1},h]]=[[{y_1},{y_1}],h]-[[{y_1},h],{y_1}]=[[{y_1},{y_1}],h]=a_{1}^m [x_m, h]=-m a_{1}^mx_m,$$
which deduce $a_{1}^m=0$, hence  $[{y_1},{y_1}]=0.$

From the following identities
$$0=[{y_2},[{y_1},h]]=[[{y_2},{y_1}],h]-[[{y_2},h],{y_1}] =
-[{y_1},h],$$ $$0=[{y_2},[{y_1},f]]=[[{y_2},{y_1}],f]-[[{y_2},f],{y_1}] =-[{y_1},f],$$
$$0=[{y_2},[{y_1},e]]=[[{y_2},{y_1}],e]-[[{y_2},e],{y_1}] = - [{y_1},e],$$
we obtain $[{y_1},h]=[{y_1},f]=[{y_1},e] = 0.$

Using the above obtained equalities and the following
$$\begin{array}{ll}
0& = [{y_1},[{y_2},f]] = [[{y_1},{y_2}],f] - [[{y_1},f],{y_2}] = [[{y_1},{y_2}],f]=\\[2mm]
&=[{y_1}+\sum\limits_{k=0}^m a_{{1}{2}}^kx_k,f]=\sum\limits_{k=0}^m
a_{{1}{2}}^k[x_k,f]=\sum\limits_{k=0}^{m-1} a_{{1}{2}}^kx_{k+1},
\end{array}$$
we get $a_{{1}{2}}^i=0$ with $0\leq i\leq m-1.$

Now from
$$\begin{array}{ll}
0& = [{y_1},[{y_2},h]] =[[{y_1},{y_2}],h] - [[{y_1},h],{y_2}]
=[[{y_1},{y_2}],h]=\\[2mm]
&=[{y_1}+a_{{1}{2}}^m x_m , h]=-ma_{{1}{2}}^mx_m,
\end{array}$$
we get $a_{{1}{2}}^m=0$, consequently  $a_{{1}{2}}^m=0$  for all $0 \leq i
\leq m$, i.e. $[{y_1}, {y_2}] = {y_1}.$

Thus, we obtain the following table of multiplication:
$$\begin{array}{lll}
\, [e,h]=2e, & [h,f]=2f, &[e,f]=h, \\
\, [h,e]=-2e& [f,h]=-2f, & [f,e]=-h,\\
\, [x_k,h]=(m-2k)x_k & 0\leq k\leq m,&\\
\, [x_k,f]=x_{k+1},  & 0\leq k\leq m-1, & \\
\, [x_k,e]=-k(m+1-k)x_{k-1}, & 1\leq k\leq m, &\\
\, [{y_2},e]=\sum\limits_{j=0}^m a_{{2}e}^jx_j ,&[{y_2},f]=\sum\limits_{j=0}^m a_{{2}f}^jx_j ,& [{y_2},h]=\sum\limits_{j=0}^m a_{{2}h}^jx_j, \\
\, [{y_1},{y_2}]={y_1},& [{y_2},{y_1}]=-{y_1},& [{y_2},{y_2}]=\sum\limits_{j=0}^m a_{{2}}^jx_j, \\
\, [x_k,{y_2}]=ax_k, &0\leq k\leq m. \\
 \end{array}$$

In order to complete the proof of the theorem we need to prove that $[{y_2},{y_2}] = 0,$ and $[R^{-1}, sl_2^{-1}] =0$.

Consider two cases:\\

\textbf{Case 1.}

Let $a\neq0$, then taking the change of the basic element as follows $${y_2}'={y_2}-\sum\limits_{j=0}^m\frac{a_{2}^{j}}{a}x_j,$$ we get
$$\begin{array}{ll}
[{y_2}',{y_2}']&=\left [{y_2}-\sum\limits_{j=0}^m\frac{a_{2}^j}{a}x_j,{y_2}-
\sum\limits_{j=0}^m\frac{a_{2}^j}{a}x_j\right ]=\\[2mm]
&=[{y_2},{y_2}]-\left [\sum\limits_{j=0}^m\frac{a_{2}^j}{a}x_j,{y_2}\right ]=
\sum\limits_{j=0}^m a_{2}^j x_j-\sum\limits_{j=0}^m a_{2}^j x_j=0,
\end{array}$$
which leads to $[{y_2},{y_2}] = 0.$

Consider
$$\begin{array}{ll}
0 &= [{y_2},[{y_2},h]] = [[{y_2},{y_2}],h] - [[{y_2},h],{y_2}] = -[[{y_2},h],{y_2}] =\\[2mm]
& = -\sum\limits_{j=0}^m a_{{2}h}^j[x_j,{y_2}]=-\sum\limits_{j=0}^m a_{{2}h}^jax_j,
\end{array}$$
which gives $a_{{2}h}^j=0$ for  $0 \leq   j \leq   m$.

Similarly from the equalities
$$\begin{array}{ll}
0 &= [{y_2},[{y_2},f]] = [[{y_2},{y_2}],f] - [[{y_2},f],{y_2}] =  -\sum\limits_{j=0}^ma_{{2}f}^j[x_j,{y_2}]=-\sum\limits_{j=0}^ma_{{2}f}^j a x_j,\\[2mm]
0 &= [{y_2},[{y_2},e]] = [[{y_2},{y_2}],e] - [[{y_2},e],{y_2}] =  -\sum\limits_{j=0}^ma_{{2}e}^j[x_j,{y_2}]=-\sum\limits_{j=0}^ma_{{2}e}^jax_j,
\end{array}$$
we get $a_{{2}f}^j=a_{{2}e}^j=0$ for $0\leq   j \leq m$.
Hence, $[R^{-1},sl_2^{-1}]=0$.

Thus, we proved the theorem for $a\neq0$.\\

\textbf{Case 2.}

Let $a=0$, then we consider the identity
$$[{y_2},[{y_2},f]] = [[{y_2},{y_2}],f] - [[{y_2},f],{y_2}]$$
and we derive
$$\begin{array}{l}
0=\sum\limits_{j=0}^ma_{2}^i[x_i,f]=\sum\limits_{j=0}^{m-1}a_{2}^ix_{i+1}\ \Rightarrow\ a_{2}^i=0, \ 0\leq i\leq m-1, \ \ i.e. \ \ [{y_2},{y_2}]=a_{2}^m x_m.
\end{array}$$

From the chain of the equalities $$0 = [{y_2},[{y_2},h]] = [[{y_2},{y_2}],h] -
[[{y_2},h],{y_2}]=a_{2}^m[x_m,h]=-m a_{2}^m x_m,$$ we obtain $a_{2}^m=0$, that is $[{y_2}, {y_2}] = 0.$

Let us take the change of the basic element in the form:
$${y_2}'={y_2}-\sum\limits_{j=1}^m\frac{a_{{2}e}^{j-1}}{-mj+j(j-1)}x_j$$

Then
$$\begin{array}{ll}
[{y_2}',e]&=[{y_2},e]-\sum\limits_{j=1}^m\frac{a_{{2}e}^{j-1}}{-mj+j(j-1)}[x_j,e]=\\[2mm]
&=[{y_2},e]-\sum\limits_{j=1}^m\frac{a_{{2}e}^{j-1}}{-mj+j(j-1)}(-mj+j(j-1))x_{j-1}=\\[2mm]
&=\sum\limits_{j=0}^m a_{{2}e}^j x_j-\sum\limits_{j=1}^{m} a_{{2}e}^{j-1}x_{j-1}=\\[2mm]
&=\sum\limits_{j=0}^m a_{{2}e}^j x_j-\sum\limits_{j=0}^{m-1} a_{{2}e}^j x_j=a_{{2}e}^m x_m.
\end{array}$$

Thus, we can assume that $$[{y_2},e]=a_{{2}e}^m x_m,\ \ \
[{y_2},h]=\sum\limits_{j=0}^ma_{{2}h}^jx_j,\ \ \
[{y_2},f]=\sum\limits_{j=0}^ma_{{2}f}^jx_j.$$

We have
$$\begin{array}{ll}
[{y_2},[e,h]]&=[[{y_2},e],h]-[[{y_2},h],e]=a_{{2}e}^m [x_m,h]-\sum\limits_{j=0}^ma_{{2}h}^j[x_j,e]=\\[2mm]
&=-ma_{{2}e}^mx_m-\sum\limits_{j=0}^ma_{{2}h}^j(-mj+j(j-1))x_{j-1}.
\end{array}$$

On the other hand $[{y_2},[e,h]] = 2[{y_2},e] =2a_{{2}e}^m x_m$.

Comparing the coefficients at the basic elements, we get $a_{{2}e}^m=0$ and
$a_{{2}h}^j=0$ where $1\leq j\leq m.$ Hence, $[{y_2},e]=0$,\ $[{y_2},f]=\sum\limits_{j=0}^m a_{{2}f}^jx_j$,\
$[{y_2},h]=a_{{2}h}^0 x_0.$

Consider
$$\begin{array}{ll}
[{y_2},[e,f]]&=[[{y_2},e],f]-[[{y_2},f],e]=-\sum\limits_{j=0}^m a_{{2}f}^j[x_j,e]=\\
&=-\sum\limits_{j=0}^ma_{{2}f}^j(mj+j(j-1))x_{j-1}=\\[2mm]
&=ma_{{2}f}^1x_0-\sum\limits_{j=2}^ma_{{2}f}^j(mj+j(j-1))x_{j-1}.
\end{array}$$

On the other hand
$$[{y_2},[e,f]] = [{y_2},h]=a_{{2}h}^0x_0.$$

Comparing the coefficients, we obtain $a_{{2}h}^0=m a_{{2}f}^1$
and $a_{{2}f}^j=0$ for $2\leq j\leq m.$
Then we have the product $[{y_2}, f] = a_{{2}f}^0 x_0+a_{{2}f}^1 x_1.$

Now we consider the equalities
$$-2[{y_2},f]  = [{y_2},[f,h]] = [[{y_2},f],h] - [[{y_2},h],f]  = [a_{{2}f}^0x_0+a_{{2}f}^1 x_1,h] - ma_{{2}f}^1[ x_0, f],$$
and we have
$$\begin{array}{l}
-2a_{{2}f}^0x_0-2a_{{2}f}^1 x_1=ma_{{2}f}^0x_0+a_{{2}f}^1(m-2)x_1-ma_{{2}f}^1x_1 \Rightarrow a_{{2}f}^0=0.
\end{array}$$

Therefore, $[{y_2},f]=a_{{2}f}^1x_1$ and $[{y_2},h]=ma_{{2}f}^1x_0.$

Taking the change ${y_2}'={y_2}-a_{{2}f}^1x_0,$ we obtain
$$\begin{array}{ll}
[{y_2}' ,f] &= [{y_2},f] - a_{{2}f}^1[ x_0, f] = a_{{2}f}^1x_1 - a_{{2}f}^1x_1 = 0,\\[2mm]
[{y_2}' ,h] &= [{y_2},h]- a_{{2}f}^1[ x_0, h] = ma_{{2}f}^1x_0 - ma_{{2}f}^1x_0 = 0.
\end{array}$$

Thus, we have $[R^{-1}, sl_2^{-1}] = 0$ which completes the proof of the theorem.
\end{proof}

\


In the case when the dimension of the ideal $I$ is equal to three, we get the family of Leibniz algebras with the following table of multiplication:
$$\begin{array}{lll} \, [e,h]=2e, & [h,f]=2f, &[e,f]=h, \\[1mm]
\, [h,e]=-2e& [f,h]=-2f, & [f,e]=-h,\\[1mm]
\, [x_1,e]=-2x_0& [x_2,e]=-2x_1, & [x_0,f]=x_1,\\[1mm]
\, [x_1,f]=x_2& [x_0,h]=2x_0, & [x_2,h]=-2x_2,\\[1mm]
\, [e,{y_1}]=\lambda x_0& [f,{y_1}]=\frac{1}{2} \lambda x_2, & [h,{y_1}]=\lambda x_1,\\[1mm]
\, [e,{y_2}]=\mu x_0& [f,{y_2}]=\frac{1}{2} \mu x_2, & [h,{y_2}]=\mu x_1,\\[1mm]
\, [{y_1},{y_2}]={y_1}& [{y_2},{y_1}]=-{y_1}, & [{y_2},{y_2}]=-\frac{ab}{2}x_2,\\[1mm]
\, [x_0,{y_2}]=ax_0& [x_1,{y_2}]=ax_1, & [x_2,{y_2}]=ax_2,\\[1mm]
\, [{y_2},e]=bx_1& [{y_2},h]=bx_2,
 \end{array}$$

Verifying the Leibniz identity to above family of algebras, using a program in the software $Mathematica$ \cite{programLeibniz}, we get the condition $\lambda(1-a)=0.$

Taking the change in the form  ${y_2}'={y_2}+\frac{b}{2}x_2$ we obtain
 $$\begin{array}{ll}
 [{y_2}',e]&=[{y_2}+\frac{b}{2}x_2,e]=[{y_2},e]+\frac{b}{2}[x_2,e]=bx_1-bx_1=0,\\[1mm]
[{y_2}',h]&=[{y_2}+\frac{b}{2}x_2,h]=[{y_2},h]+\frac{b}{2}[x_2,h]=bx_2-bx_2=0,\\[1mm]
[{y_2}',{y_2}']&=[{y_2}+\frac{b}{2}x_2,{y_2}+\frac{b}{2}x_2]=[{y_2},{y_2}]+\frac{b}{2}[x_2,{y_2}]=-\frac{ab}{2}bx_2+\frac{ab}{2}bx_2=0.
\end{array}$$

Thus, we can assume that
$[{y_2}'e]=[{y_2}',h]=[{y_2}',{y_2}']=0$
and we have the family of algebras $L(\lambda,\mu,a).$
$$\begin{array}{lll} \, [e,h]=2e, & [h,f]=2f, &[e,f]=h, \\[1mm]
\, [h,e]=-2e& [f,h]=-2f, & [f,e]=-h,\\[1mm]
\, [x_1,e]=-2x_0& [x_2,e]=-2x_1, & [x_0,f]=x_1,\\[1mm]
\, [x_1,f]=x_2& [x_0,h]=2x_0, & [x_2,h]=-2x_2,\\[1mm]
\, [e,{y_1}]=\lambda x_0& [f,{y_1}]=\frac{1}{2} \lambda x_2, & [h,{y_1}]=\lambda x_1,\\[1mm]
\, [e,{y_2}]=\mu x_0& [f,{y_2}]=\frac{1}{2} \mu x_2, & [h,{y_2}]=\mu x_1,\\[1mm]
\, [x_0,{y_2}]=ax_0& [x_1,{y_2}]=ax_1, & [x_2,{y_2}]=ax_2,\\[1mm]
\, [{y_1},{y_2}]={y_1}& [{y_2},{y_1}]=-{y_1},
 \end{array}$$
with the condition $\lambda(1-a)=0.$

\begin{thm}
Let $L$ be a Leibniz algebra such that $L/I\cong sl_2\oplus R$, where
$R$ is a two-dimensional solvable ideal and $I$ is three-dimensional right irreducible module over $sl_2$. Then algebra
$L$ is isomorphic to one of the following pairwise non isomorphic algebras :
$$L(1, 0, 1);\ L(0, 1, a);\ L(0, 0, a),\mbox{ with } a \in F.$$
\end{thm}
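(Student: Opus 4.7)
The strategy is to exhibit a short list of admissible basis changes on $L(\lambda,\mu,a)$, use them to normalize the triple $(\lambda,\mu,a)$ into one of the three listed forms, and then confirm pairwise non-isomorphism by identifying intrinsic invariants of the algebra.

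The first step is to compute the effect on $(\lambda,\mu,a)$ of the basis changes that preserve the rigid part of the multiplication table (the $sl_2$-relations, the $sl_2$-action on $I$ given by Theorem~\ref{jac}, the products $[y_1,y_2]=y_1$, $[y_2,y_1]=-y_1$, and $[y_2,y_2]=0$, together with $[I,y_1]=[I,I]=0$). Using the rigidity of the three-dimensional irreducible $sl_2$-module and of the two-dimensional nonabelian Lie algebra $R$, the admissible one-parameter families are the common rescaling $x_k\mapsto cx_k$ with $c\in F^\times$, the rescaling $y_1\mapsto\alpha y_1$ with $\alpha\in F^\times$, and the shift $y_2\mapsto y_2+\beta y_1$ with $\beta\in F$. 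Direct substitution shows that $x_k\mapsto cx_k$ sends $(\lambda,\mu,a)$ to $(\lambda/c,\mu/c,a)$; that $y_1\mapsto\alpha y_1$ sends it to $(\alpha\lambda,\mu,a)$; and that $y_2\mapsto y_2+\beta y_1$ sends it to $(\lambda,\mu+\beta\lambda,a)$, in each case preserving the constraint $\lambda(1-a)=0$.

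Normalization now splits into three cases. If $\lambda\neq 0$ then $\lambda(1-a)=0$ forces $a=1$; the rescaling $y_1\mapsto\lambda^{-1}y_1$ achieves $\lambda=1$, and then $y_2\mapsto y_2-\mu y_1$ produces $\mu=0$, giving $L(1,0,1)$. If $\lambda=0$ and $\mu\neq 0$, the rescaling $x_k\mapsto\mu x_k$ sets $\mu=1$ while $a$ is unchanged, giving $L(0,1,a)$. If $\lambda=\mu=0$, the algebra is already $L(0,0,a)$. For pairwise non-isomorphism I would argue invariantly: the condition $\lambda=0$ is equivalent to the coordinate-free statement that $sl_2$ annihilates every lift of $\overline{[R,R]}\subseteq L/I$, which distinguishes $L(1,0,1)$ from the two $\lambda=0$ families; when $\lambda=0$, the further condition $\mu=0$ is equivalent to $[sl_2,R]=0$ and separates $L(0,0,a)$ from $L(0,1,a)$; finally, within each of the two $a$-parametrized families, the scalar $a$ is the unique eigenvalue of right multiplication on $I$ by any lift of the canonical generator $\overline{y_2}\in R/[R,R]$, and the only modifications of such a lift lie in $Fy_1+I$, which using $[I,y_1]=0$ and $[I,I]=0$ leave the eigenvalue unchanged, so distinct values of $a$ yield non-isomorphic algebras.

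The main obstacle I anticipate is making the claim of exhaustiveness of the admissible basis changes fully rigorous---in particular, ruling out subtle automorphisms that combine a non-standard $sl_2$-automorphism (for instance the Weyl involution, which reorders the $x_k$) with compensating rescalings on $I$ and $R$ in a way that might alter the invariant $a$ or the zero-versus-nonzero status of $\lambda$ and $\mu$. This reduces to standard rigidity statements about irreducible $sl_2$-modules and about the two-dimensional nonabelian Lie algebra, which must be assembled carefully into a single argument for the full Leibniz algebra.
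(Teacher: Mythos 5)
Your normalization of $(\lambda,\mu,a)$ coincides with the paper's: when $\lambda\neq 0$ the constraint $\lambda(1-a)=0$ gives $a=1$, one rescales $y_1$ and shifts $y_2$ by a multiple of $y_1$ to reach $L(1,0,1)$, and when $\lambda=0\neq\mu$ one rescales the $x_k$ to get $\mu=1$. The difference is in the non-isomorphism step: the paper simply cites a Mathematica computation, while you propose an invariant-theoretic argument, and that is where there is a genuine gap.

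Two of your invariants are sound: right multiplication on $I$ by any lift of the canonical class of $\overline{y_2}$ is $a\cdot\mathrm{id}$ (since $[I,y_1]=[I,I]=0$), so $a$ is an isomorphism invariant; and any lift of an $sl_2$-element applied on the left to any lift of $\overline{y_1}\in[R,R]$ gives a multiple of $\lambda$, because $[sl_2^{-1},I]=[I,y_1]=[I,I]=0$, so the vanishing of $\lambda$ is invariant. But your criterion ``$\mu=0\iff[sl_2,R]=0$'' is not isomorphism-invariant: an isomorphism only preserves $\varphi^{-1}(sl_2)$, not the chosen complement $\langle e,h,f\rangle$, and lifts of $sl_2$-elements may be shifted by elements of $I$, which act on $y_2$ from the left via $[x_k,y_2]=ax_k\neq 0$ whenever $a\neq0$. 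Concretely, in $L(0,\mu,a)$ with $a\neq0$ the change of basis $e'=e-\frac{\mu}{a}x_0$, $h'=h-\frac{\mu}{a}x_1$, $f'=f-\frac{\mu}{2a}x_2$ (all other basis vectors unchanged) preserves every product in the table and yields $[e',y_2]=[h',y_2]=[f',y_2]=0$; hence $L(0,1,a)\cong L(0,0,a)$ for every $a\neq0$, and no invariant can separate that pair. So your argument does separate $L(1,0,1)$, does distinguish the parameter $a$, and does separate $L(0,1,0)$ from $L(0,0,0)$, but the claimed separation of $L(0,1,a)$ from $L(0,0,a)$ for $a\neq0$ fails --- which also calls into question the pairwise non-isomorphism asserted in the statement itself and checked in the paper only by computer. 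Note finally that the dangerous unaccounted-for basis changes are not the Weyl-involution type you worried about, but exactly these $I$-shifts of the lifts of $e,h,f$.
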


\begin{proof} Similarly as above we derive $\lambda(1-a)=0.$

Let $\lambda\neq0$, then $a=1$. By change
${y_1}'=\frac{1}{\lambda}{y_1}$, ${y_2}'=-\frac{\mu}{\lambda}{y_1}+{y_2}$ we deduce
$$\begin{array}{l}
[e,{y_1}']=[e,\frac{1}{\lambda}{y_1}]=\frac{1}{\lambda}\lambda x_0=x_0,\\[2mm]
[f,{y_1}']=[e,\frac{1}{\lambda}{y_1}]=\frac{1}{2\lambda}\lambda x_2=\frac{1}{2}x_2,\\[2mm]
[h,{y_1}']=[h,\frac{1}{\lambda}{y_1}]=\frac{1}{\lambda}\lambda x_1=x_1,\\[2mm]
[e,{y_2}']=[e,-\frac{\mu}{\lambda}{y_1}+{y_2}]=-\frac{\mu}{\lambda}[e,{y_1}]+[e,{y_2}]=-\frac{\mu}{\lambda}\lambda x_0+\mu x_0=0,\\[2mm]
[f,{y_2}']=[f,-\frac{\mu}{\lambda}{y_1}+{y_2}]=-\frac{\mu}{\lambda}[f,{y_1}]+[f,{y_2}]=-\frac{\mu}{2\lambda}\lambda x_2+\frac{1}{2}\mu x_2=0,\\[2mm]
[h,{y_2}']=[h,-\frac{\mu}{\lambda}{y_1}+{y_2}]=-\frac{\mu}{\lambda}[h,{y_1}]+[h,{y_2}]=-\frac{\mu}{\lambda}\lambda x_1+\mu x_1=0.
\end{array}$$

Thus, we can assume that $\lambda=1$ and $\mu=0$. Hence, we get the algebra
$$L(1,0,1)$$

If $\lambda=0$, then in the case when $\mu\neq0$ by scale of basis of $I$ we can suppose that $\mu=1$, i.e. we obtain the algebra $L(0, 1, a).$

If $\lambda=0$, then in the case when $\mu=0$ we get the algebra
$L(0, 0, a).$

Pairwise non isomorphismness
 of these algebras we obtain by using the program in $Mathematica$ \cite{program}. The theorem is proved.
\end{proof}

\

Analyzing the above obtained results we can formulate

{\bf Conjecture}: Any Leibniz algebra is decomposed into semidirect sum of its corresponding Lie algebra and the ideal I.

\end{document}